\newtheorem{theorem}{Theorem}%[section]
\newtheorem{lemma}{Lemma}
\newtheorem{corollary}{Corollary}
\newtheorem{proposition}{Proposition}
\theoremstyle{definition}
\newtheorem{remark}{Remark}
\def\N{\mathbb{N}}
\def\P{\mathbb{P}}
\def\E{\mathbb{E}}
\renewcommand{\phi}{\varphi}
\renewcommand{\epsilon}{\varepsilon}
\newcommand{\1}{{\text{\Large $\mathfrak 1$}}}
\newcommand{\var}{\operatorname{var}}
\renewcommand{\limsup}{\varlimsup}
\renewcommand{\liminf}{\varliminf}
\definecolor{mygray}{gray}{0.9}
\definecolor{deeppink}{RGB}{255,20,147}
\definecolor{mygreen}{rgb}{0.05, 0.576, 0.03}
\definecolor{myred}{rgb}{0.768, 0.09, 0.09}
\long\def\symbolfootnote[#1]#2{\begingroup
\def\thefootnote{\fnsymbol{footnote}}\footnote[#1]{#2}\endgroup}
\title{Probabilities of large values for sums of i.i.d.\ non-negative random variables with regular tail of index $-1$}
\date{\today}
\author{Matthias Birkner\footnote{Johannes-Gutenberg-Universit\"{a}t Mainz,
Institut f\"{u}r Mathematik\\
Staudingerweg 9, 55099 Mainz, Germany\\Email: \texttt{birkner@mathematik.uni-mainz.de}} {} and Linglong Yuan\footnote{University of Liverpool,
Department of Mathematical Sciences\\
Peach Street, L69 7ZL, Liverpool, UK\\Email: \texttt{yuanlinglongcn@gmail.com}}
\footnote{Xi'an Jiaotong-Liverpool University,
Department of Mathematical Sciences\\
Ren'ai Road, 215111, Suzhou, China}
}
\begin{document}
\maketitle

\begin{abstract}
    Let $\xi_1, \xi_2, \dots$ be i.i.d.\ non-negative random variables whose tail varies regularly with index $-1$, let $S_n$ be the sum and $M_n$ the largest of the first $n$ values. We clarify for which sequences $x_n\to\infty$ we have $\P(S_n \ge x_n) \sim \P(M_n \ge x_n)$ as $n\to\infty$. Outside this regime, the typical size of $S_n$ conditioned on exceeding $x_n$ is not completely determined by the largest summand and we provide an appropriate correction term which involves the integrated tail of $\xi_1$.\\
    
    \noindent
    Keywords: Sum and maximum of independent heavy tailed random variables, conditional limit theorem, large deviations, heavy-tailed offspring distribution\\
    
    \noindent
    MSC 2020 subject classification: 60F10, 60E07, 92D10
\end{abstract}
\section{Motivation and main results}
Let $\xi_1,\xi_2,\ldots$ be independent and identically distributed (i.i.d.) real-valued random variables, put
$$S_n:=\sum_{i=1}^n\xi_i, \quad M_n:=\max\{\xi_1,\xi_2,\ldots,\xi_n\}.$$
There is a considerable body of literature analysing 
\begin{equation}\label{snx}\overline F_n(x):=\P(S_n\geq x), \text{ as } n\to\infty, x\to\infty,\end{equation}
usually assuming that the right tail of $\xi_1$,
\begin{equation}\label{righttail}\overline F(t):=\P(\xi_1\geq t),\end{equation}
and/or the left tail, $F(-t):=\P(\xi_1\leq -t)$, are/is regularly varying in $t$ as $t\to\infty$.  See \cite{B00} and \cite{B03} and references therein.  Additionally, many variations of \eqref{snx} have been considered, such as letting the number $n$ be random, see for instance \cite{GM77} and  \cite{DFK10}. 

A natural class of models in mathematical population genetics, which can describe the evolution
of a population of fixed size $n$ and incorporate substantial variation of the individuals' reproductive
success in every generation (in an elegantly ``tuneable'' manner), describes reproduction as a two-step
procedure as follows: For $i=1,2,\dots,n$ let individual $i$ produce $\xi_i$ ($\ge 0$) potential offspring or ``juveniles'',
where the $\xi_i$ are i.i.d.\ as above. Then, sample $n$ times without replacement from this pool of juveniles to form the next adult generation. See, for example, \cite{S03}, \cite{BLS18} and \cite{BY21}.
When analysing such a model, it is important to understand \eqref{snx} and the contribution to $S_n$ from different individuals when $S_n$ is large. In this application, the $\xi_i$'s take nonnegative integers.
Motivated by an application in population genetics \cite{BY21}, in this paper we consider the case that $\xi_i$'s are i.i.d.\ and take nonnegative real values such that the tail \eqref{righttail} is regularly varying with index $-1$ as $x\to\infty$. In this case, the population models are related to the famous Bolthausen-Sznitman coalescent, see, e.g., the case $a=1$ in \cite[Section~3]{S03}.

When \eqref{righttail} varies regularly with an index different from $-1$, the questions we address here have already been answered, see, e.g.\ \cite{N79}, \cite{Na82}, \cite{S03}. However, no-one we asked seemed aware of those results we needed for the case of index $-1$. The $\xi_i$'s we consider belong to the family of subexponential random variables. See (\cite{FKZ11}, section 3.2) for necessary and sufficient conditions for a real-valued random variable to be subexponential. 

For subexponential random variables, many researchers have investigated the following problem: Find  sequences $(x_n)_{n\geq 1}$ such that 
\begin{equation}\label{sxi}\lim_{n\to\infty}\sup_{x\geq x_n}\left|\frac{\overline F_n(x)}{n\overline F(x)}-1\right|=0.\end{equation}
Note that $\P(M_n\geq x) = 1 - (1-\overline F(x))^n \sim n\overline F(x)$ uniformly in $x \ge x_n$ if $\overline F(x_n)=o(n^{-1})$. Thus \eqref{sxi} has the interpretation that the (sufficiently far out) tail of the sum of i.i.d.\ random variables is determined by the tail of the largest summand. Interested readers can find results on this topic in \cite{MN98, N79, V94}. Denisov {\it et al} \cite{DDS08} provided a unifying approach to find a sufficient condition on $(x_n)_{n\geq 1}$ for \eqref{sxi} to hold. Moreover a local version of \eqref{sxi} was proved in \cite{DDS08} (see $\Delta$-subexponentiality, a notion introduced in \cite{SSD03}). In this paper we focus on $\overline F_n(x)$ only. 

For subexponential random variables, conditional on $S_n\geq x$ for $x$ sufficiently large and satisfying \eqref{sxi}, Armend\'ariz and Loulakis \cite{AL11} proved that the $n-1$ smallest random variables behave like $n-1$ i.i.d.\ random variables of the original distribution. The results of this paper also cover the cases satisfying the local version of \eqref{sxi}. In the following, we will study the $n-1$ smallest variables from a different perspective, namely their contributions to $S_n$. 

The novelty of this paper is that we provide a necessary and sufficient condition for \eqref{sxi} to hold in the case that $\xi_i$'s are i.i.d.\ and nonnegative, with the right tail regularly varying of index $-1.$ Furthermore, in this case, we characterise the behaviour of $S_n$ and $M_n$ when the threshold $x_n$ goes to infinity but \eqref{sxi} does not hold.

\begin{theorem}\label{main}Let $\xi_1,\xi_2,\ldots$ be i.i.d.\ nonnegative random variables with the right tail regularly varying of index $-1$: 
\begin{equation}\label{xic}
\overline F(x)=x^{-1}\ell(x),\quad  x\to\infty,
\end{equation} 
where $\ell(x)$ is a slowly varying function. Define 
$$G(x):=\int_0^x\overline F(t)dt, \quad x\geq 0.$$
Then 
\begin{enumerate}
\item[(a).] The convergence \eqref{sxi} holds if and only if \begin{equation}\label{xnginf}\lim_{n\to\infty}\frac{x_n}{nG(x_n)}=\infty.\end{equation}
\item[(b).] Let $(x_n)_{n\geq 1}$ and $(x_n')_{n\geq 1}$ be two positive sequences such that 
\begin{equation}\label{xngfini}x_n\leq x_n',\quad \liminf_{n\to\infty}\frac{x_n}{nG(x_n)}>0,\quad \limsup_{n\to\infty}\frac{x_n'}{nG(x_n')}<\infty.\end{equation}
Then for any positive sequence $(a_n)_{n\geq 1}$ such that $\lim_{n\to\infty}a_n\in(0,\infty)$, 
\begin{equation}\label{xx'}\lim_{n\to\infty}\sup_{x_n\leq x\leq x_n'}\left|\frac{\overline F_n\left(nG(x)+a_nx\right)}{n\overline F(a_nx)}-1\right|=0.\end{equation}
\item[(c).] If $\limsup_{n\to\infty}\frac{x_n}{nG(x_n)}=0$, then 
$$\lim_{n\to\infty} \overline F_n(x_n) = 1.$$
\end{enumerate}
 \end{theorem}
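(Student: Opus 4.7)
The key analytic input is Karamata's theorem applied to the slowly varying $\ell$: the integrated tail $G(x)=\int_0^x\overline F(t)\,dt=\E[\xi_1\wedge x]$ is slowly varying with $G(x)/\ell(x)\to\infty$, and the truncated second moment satisfies $\E[(\xi_1\wedge x)^2]=2\int_0^x\ell(t)\,dt\sim 2x\ell(x)$. Writing $\lambda_n(x):=n\overline F(x)=(\ell(x)/G(x))\cdot(nG(x)/x)$, these estimates imply that in the regime of part (b), where $x/(nG(x))$ is pinned in a compact subinterval of $(0,\infty)$, one has $\lambda_n(x)\to 0$: the expected number of summands exceeding any constant multiple of $x$ is $o(1)$, and the single-big-jump heuristic with a deterministic offset $nG(x)$ (the truncated mean) should govern the asymptotics. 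I would establish the parts in the order (c), (b), (a).

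Part (c) is immediate from Chebyshev: with $T_n:=\sum_{i=1}^n \xi_i\1_{\{\xi_i\le x_n\}}\le S_n$ one has $\E T_n=nG(x_n)$ and $\var T_n\le 2nx_n\ell(x_n)(1+o(1))$, so under $\limsup x_n/(nG(x_n))=0$,
\[
\P(S_n<x_n)\le\frac{\var T_n}{(nG(x_n)-x_n)^2}\lesssim\frac{x_n}{nG(x_n)}\cdot\frac{\ell(x_n)}{G(x_n)}\to 0.
\]

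Part (b) is the technical core. Fix $x\in[x_n,x_n']$, set $y_n(x):=nG(x)+a_nx$ and $N:=\#\{i:\xi_i\ge a_nx\}$. For the \emph{lower bound}, I would apply inclusion-exclusion to the events $A_i:=\{\xi_i\ge(a_n+\delta)x,\ \sum_{j\ne i}\xi_j\1_{\{\xi_j\le a_nx\}}\ge nG(x)-\delta x\}$: the main term is $n\overline F((a_n+\delta)x)\cdot\P(T_{n-1}(a_nx)\ge nG(x)-\delta x)$, the conditional factor tends to $1$ by the Chebyshev argument of (c), the correction is $\binom{n}{2}\overline F((a_n+\delta)x)^2=O(\lambda_n^2)=o(\lambda_n)$, and letting $\delta\downarrow 0$ together with regular variation of $\overline F$ at index $-1$ yields $\liminf\overline F_n(y_n)/(n\overline F(a_nx))\ge 1$. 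For the \emph{upper bound}, decompose by $N$: the term $\{N\ge 2\}$ contributes $O(\lambda_n^2)=o(\lambda_n)$ and $\{N=1\}$ contributes at most $n\overline F(a_nx)=\lambda_n$ trivially. The main obstacle is the $\{N=0\}$ term $\P(T_n(a_nx)\ge y_n)$: Chebyshev and Bernstein applied directly yield only $O(\lambda_n)$ and a positive constant respectively, neither $o(\lambda_n)$. My plan is to insert an intermediate truncation at level $T'=x/L_n$ with $L_n\asymp\log(1/\lambda_n)$, write $T_n(a_nx)=T_n(T')+\sum_i\xi_i\1_{\{T'<\xi_i\le a_nx\}}$, and treat the two pieces separately: at the low truncation level $T'$, a Bernstein bound gives the first piece an upper bound $\exp(-cL_n)=o(\lambda_n)$, while the second piece is handled by splitting on the count $K$ of medium-size summands -- $\P(K\ge 2)$ is of order $(L_n\lambda_n)^2=o(\lambda_n)$, and on $\{K=1\}$ the conditional distribution of a medium summand is concentrated near $T'$ (a quantitative consequence of regular variation of index $-1$), so that this contribution is also $o(\lambda_n)$.

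Part (a) is then a corollary. Sufficiency of $x_n/(nG(x_n))\to\infty$ follows from a strengthening of the argument of (b) in the easier regime where the offset $nG(x)$ is negligible relative to the threshold: the $\{N=0\}$ estimate becomes immediate from Chebyshev alone, and $\overline F_n(x)\sim n\overline F(x)$ holds uniformly in $x\ge x_n$. Necessity is by contradiction: if $x_n/(nG(x_n))\not\to\infty$, pass to a subsequence on which it is bounded by some $C<\infty$. If the lim inf along the subsequence is $0$, apply (c) at both $x_n$ and $2x_n$ (both admissible by slow variation of $G$) to obtain $\overline F_n(x_n),\overline F_n(2x_n)\to 1$; since $\overline F(2x_n)/\overline F(x_n)\to 1/2$, at least one of $\overline F_n(x_n)/n\overline F(x_n)$ or $\overline F_n(2x_n)/n\overline F(2x_n)$ must fail to converge to $1$. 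Otherwise, $x_n/(nG(x_n))$ is bounded between two positive constants along the subsequence, and applying (b) with $x=x_n$ and $a_n=1$ yields $\overline F_n(nG(x_n)+x_n)\sim n\overline F(x_n)$; evaluating the supremum in \eqref{sxi} at $x^*:=nG(x_n)+x_n\ge x_n$, the relation $\overline F(x^*)/\overline F(x_n)\to(1+nG(x_n)/x_n)^{-1}$ forces $\overline F_n(x^*)/n\overline F(x^*)\to 1+nG(x_n)/x_n>1$ along the subsequence, again contradicting \eqref{sxi}.
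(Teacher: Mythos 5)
Your overall architecture (truncate the body, Chebyshev, single big jump with the deterministic offset $nG(x)$, prove (c) first and deduce (a)) coincides with the paper's, and your part (c) and your lower bound in (b) are essentially the paper's arguments. The genuine gap is in the upper bound of part (b), in the no-big-jump term $\{N=0\}$. Your plan handles $\{K=1\}$ by arguing that the single medium summand is concentrated near $T'=x/L_n$. Quantitatively, regular variation of index $-1$ gives $\P(\xi>\epsilon x\mid T'<\xi\le a_nx)\asymp 1/(\epsilon L_n)$, so the exceptional event $\{K=1,\ \xi_j\in(\epsilon x,a_nx)\}$ has probability of order $n\overline F(T')\cdot\tfrac{1}{\epsilon L_n}\asymp \lambda_n/\epsilon$, which is $O(\lambda_n)$, not $o(\lambda_n)$. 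On that event $\xi_j$ may be as large as $a_nx$, so the low-truncated remainder only needs to reach roughly its own mean $nG(x)$: the required deviation is $o(x)$ and is in fact dominated by the standard deviation $\asymp x\sqrt{\lambda_n/L_n}$ of $T_{n-1}(T')$, so neither Chebyshev nor Bernstein makes this event rare. Consequently your decomposition yields only $\P(N=0,\,S_n\ge nG(x)+a_nx)=O(\lambda_n)$, which does not give $\overline F_n(nG(x)+a_nx)\le(1+o(1))\,n\overline F(a_nx)$. The claim $\P(N=0,\dots)=o(\lambda_n)$ is true, but it is exactly the near-threshold configurations (one summand just below $a_nx$) that your two-level truncation cannot control. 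The same defect propagates to your part (a) sufficiency, where ``the $\{N=0\}$ estimate is immediate from Chebyshev alone'' again produces only $O(\lambda_n)$ at truncation level of order $x$.

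The paper closes this with two devices you are missing. First, the small summands are truncated at $x_n/y_n$ where $y_n\to\infty$ is chosen (via its Lemma 1) slowly enough that $\ell(x_n/y_n)\sim\ell(x_n)$, $G(x_n/y_n)\sim G(x_n)$ and $ny_n^2\overline F(x_n)\to0$; Chebyshev then gains the factor $1/y_n$, so a deviation of order $x_n$ of the truncated sum costs $O(n\overline F(x_n)/y_n)=o(\lambda_n)$, and a medium summand in $[x_n/y_n,sa_nx_n)$ costs $n\overline F(x_n/y_n)\asymp y_n\lambda_n$ but forces such a deviation, giving a product $O(\lambda_n^2)$. Second, the ``big'' summand is declared at level $sa_nx_n$ with $s<1$, and a further parameter $\delta$ controls the fluctuation of the remainder, showing that on the main event the big summand actually exceeds $(1-\delta(1+\epsilon))a_nx_n$; the constant $1$ is recovered by letting $s\uparrow1$ and $\delta\downarrow0$ after $n\to\infty$. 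Some version of this two-parameter limit (or an explicit integration over the value of the near-threshold summand) is needed to repair your upper bound.
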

\begin{remark} It will follow from the proof of Theorem~\ref{main} that the function $\frac{x}{G(x)}$ is strictly increasing for $x$ larger than a certain number. Thus, the above theorem exhibits the behaviour of $\overline F_n(x)$ for different magnitudes of $x$. 
\end{remark}

\begin{remark}
1.\ Observe that \eqref{xic} can be compatible with $\E[\xi_1]<\infty$, for example when $\ell(x)=1/\log^2(x)$. In this case, $G(x) \to \E[\xi_1]$ as $x \to \infty$ and 
\eqref{xnginf} simply means that $x_n$ grows faster than $n$.

\noindent
2.\ When $\ell(x) \to c \in (0,\infty)$ as $x\to\infty$, we have $G(x) \sim c \log(x)$ and the conditions in \eqref{xngfini} mean that $x_n, x_n'$ are of order $n \log n$.
\end{remark}

Next result shows the contributions of different terms to $S_n$. 

\begin{theorem}\label{contri}
\label{|} Under the assumptions of Theorem~\ref{main}, let $(x_n)_{n\geq 1}$ be any positive sequence satisfying 
\begin{equation}\label{xn}\liminf_{n\to\infty}\frac{x_n}{nG(x_n)}>0\end{equation}
and let $(a_n)_{n\geq 1}$ be a positive sequence with $\lim_{n\to\infty}a_n=:a\in(0,\infty)$. 

Among all the $i$'s for which $\xi_i$ is equal to $M_n$, we select one uniformly. Denote the chosen index by $i_*$ and let $\Xi$ be the set of indices of the remaining $n-1$ terms. Let $\Xi_k$ be a (possibly random) subset of $\Xi$ of size $k$ for $1\leq k\leq n-1$, which is drawn independently of the values of $\xi_i, i \neq i_*$. Let $S_{n,k}$ be the sum of random variables whose indices are in $\Xi_k$.  Then the distribution of $S_{n,k}$ is independent of the particular set $\Xi_k$ we choose and for any $\epsilon>0$ as $n\to\infty,$
\begin{equation}\label{condm}\max_{1\leq k\leq n-1}\P\left(|S_{n,k}-kG(x_n)|\geq \epsilon x_n\,\, \,|\,\,M_n\geq a_n x_n\right)=o(1),\end{equation}
and 
\begin{equation}\label{conds}\max_{1\leq k\leq n-1}\P\left(|S_{n,k}-kG(x_n)|\geq \epsilon x_n \,\, \,|\,\, S_{n}-nG(x_n)\geq a_nx_n\right)=o(1).\end{equation}
\end{theorem}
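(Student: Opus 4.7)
\textbf{Proof plan for Theorem~\ref{contri}.}
The strategy has two main components: a conditional concentration argument for \eqref{condm} based on the law of the non-maximum variables given $M_n$, and a reduction of \eqref{conds} to \eqref{condm} via a truncated large-deviation estimate. First, by exchangeability of the $\xi_i$'s and since $\Xi_k$ is drawn independently of their values, a standard symmetry argument shows that, conditional on the multiset $\{\xi_i : i \in \Xi\}$, the subcollection indexed by $\Xi_k$ is a uniformly random $k$-subset of this multiset; hence the law of $S_{n,k}$ does not depend on the particular $\Xi_k$. Moreover, conditional on $M_n = m$ (with the uniform tie-breaker defining $i_*$, whose effect is negligible since $\overline F(m) \to 0$), the remaining $n-1$ variables are i.i.d.\ with the truncated law $\P(\xi_1 \in \cdot \mid \xi_1 \leq m)$.

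\emph{Proof of \eqref{condm}.} Karamata's theorem yields $\E[\xi_1 \mid \xi_1 \leq m] = G(m)(1 + O(\overline F(m)))$ and $\var(\xi_1 \mid \xi_1 \leq m) = O(m \ell(m))$ as $m \to \infty$. Fix $L \geq 1$ and split according to whether $M_n \leq L x_n$ or $M_n > L x_n$. The latter has conditional probability $\sim a/L$, vanishing as $L \to \infty$, by regular variation of $\overline F$. For $m \in [a_n x_n, L x_n]$, slow variation of $\ell$ gives $|G(m) - G(x_n)| = O(\ell(x_n) \log L)$, so $k|G(m) - G(x_n)| = O(n \ell(x_n) \log L) = o(x_n)$ uniformly in $k \leq n-1$; here $n \ell(x_n) = o(x_n)$ follows from $\liminf x_n/(n G(x_n)) > 0$ together with $\ell(x_n)/G(x_n) \to 0$ (a standard property of slowly varying $\ell$). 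Chebyshev's inequality then gives
\[
\P\bigl(|S_{n,k} - kG(m)| \geq \epsilon x_n / 2 \mid M_n = m\bigr) \leq \frac{C\, k\, m\, \ell(m)}{(\epsilon x_n)^2} = O\bigl(L n \overline F(x_n) \epsilon^{-2}\bigr) \to 0,
\]
uniformly over $k \leq n - 1$ and $m \in [a_n x_n, L x_n]$. Letting $L \to \infty$ after $n \to \infty$ yields \eqref{condm}.

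\emph{Proof of \eqref{conds}.} Set $B_n = \{S_n - n G(x_n) \geq a_n x_n\}$; Theorem~\ref{main} gives $\P(B_n) \sim n \overline F(a x_n)$. The key additional input is
\begin{equation}\label{eq:proposal-bbound}
\P\bigl(B_n \cap \{M_n \leq (a - \delta) x_n\}\bigr) = o\bigl(n \overline F(a x_n)\bigr) \qquad \text{for each small } \delta \in (0, a).
\end{equation}
On $\{M_n \leq T\}$ with $T = (a - \delta) x_n$, $S_n$ equals the truncated sum $S_n^{(T)} := \sum_i \xi_i \mathbf{1}(\xi_i \leq T)$, and since $n|G(T) - G(x_n)| = o(x_n)$ this event becomes $\{S_n^{(T)} - n G(T) \geq (a - o(1)) x_n\}$. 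Because the summands lie in $[0, T]$ with total variance $O(n x_n \ell(x_n))$, Bennett's inequality produces a bound of order $(n \overline F(x_n))^{a/(a - \delta) + o(1)}$, which is $o(n \overline F(a x_n))$ since $a/(a - \delta) > 1$. Decomposing
\[
\P\bigl(|S_{n,k} - k G(x_n)| \geq \epsilon x_n,\, B_n\bigr) \leq \P\bigl(|S_{n,k} - k G(x_n)| \geq \epsilon x_n,\, M_n \geq (a - \delta) x_n\bigr) + \P\bigl(B_n \cap \{M_n \leq (a - \delta) x_n\}\bigr),
\]
the first term is $o(n \overline F(a x_n))$ by \eqref{condm} applied with $a' = a - \delta$, and the second is $o(n \overline F(a x_n))$ by \eqref{eq:proposal-bbound}. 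Dividing by $\P(B_n) \sim n \overline F(a x_n)$ yields \eqref{conds}.

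\emph{Main obstacle.} The sharp estimate \eqref{eq:proposal-bbound} is the technical heart: Chebyshev alone produces $O(n \overline F(x_n))$, of the same order as $\P(B_n)$ and hence insufficient. The improvement to a bound of the form $(n \overline F(x_n))^{1 + \delta'}$ relies on the Bennett/Bernstein regime for bounded summands, enabled by $tT/(n \var(\xi_1^{(T)})) \to \infty$ --- itself a consequence of $n \overline F(x_n) \to 0$, which is forced by the assumption $\liminf x_n/(n G(x_n)) > 0$. A minor subtlety is the tie-breaking for $i_*$ when the law has atoms, but this is absorbed into lower-order corrections since $\overline F(m) \to 0$ on the event in question.
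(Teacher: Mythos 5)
Your argument is essentially correct, but it takes a genuinely different route from the paper's. For \eqref{condm} you condition on the exact value $m$ of the maximum and apply Chebyshev under the truncated conditional law, with an auxiliary cutoff $M_n\le Lx_n$ and $L\to\infty$; the paper instead never conditions on the value of the max: it writes the event as a union over which index attains the maximum, drops the ``is the maximum'' constraint, factors by independence, and reduces everything to \emph{unconditional} concentration of $S_k$ around $kG(x_n)$ (its Corollaries on $\max_k\P(|S_k-kG(x_n)|\ge a_nx_n)$, proved by truncation at $x_n$ resp.\ $x_n/y_n$ plus Chebyshev). For \eqref{conds} the paper simply invokes its Proposition giving $\P(S_n-nG(x_n)\ge a_nx_n)\sim\P(S_n-nG(x_n)\ge a_nx_n,\,M_n\ge a_nx_n)\sim\P(M_n\ge a_nx_n)$ — itself obtained by a four-term decomposition with truncation level $x_n/y_n\ll x_n$, where Chebyshev already yields the needed $o(n\overline F(x_n))$ thanks to the extra factor $1/y_n$ — whereas you truncate at the much higher level $(a-\delta)x_n$ and therefore need the super-Chebyshev bound $(n\overline F(x_n))^{a/(a-\delta)+o(1)}$ from Bennett's inequality. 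I checked your Bennett computation: $tT/(n\var)\asymp(n\overline F(x_n))^{-1}\to\infty$ does put you in the $u\log u$ regime and the resulting exponent $\tfrac{a}{a-\delta}|\log(n\overline F(x_n))|(1+o(1))$ beats $n\overline F(x_n)$, so \eqref{eq:proposal-bbound} holds; your approach buys a quantitatively stronger ``no big jump'' estimate at the cost of a heavier tool, while the paper's lower truncation level keeps everything at the level of second moments.

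Two imprecisions worth fixing. First, with uniform tie-breaking the remaining $n-1$ variables given $\{i_*=i,\,\xi_i=m\}$ are \emph{not} exactly i.i.d.\ with law $\P(\xi_1\in\cdot\mid\xi_1\le m)$ when the law has an atom at $m$ (already visible for $n=2$ Bernoulli variables); their joint law has density proportional to $1/(1+\#\{j:\xi_j=m\})$ with respect to the i.i.d.\ truncated law, so your upper bounds transfer only after multiplying by the normalising constant $\le(1-(n-1)\P(\xi_1=m\mid\xi_1\le m))^{-1}=1+o(1)$. Since the paper's motivating examples are integer-valued, this step should be written out rather than waved at. Second, $\E[\xi_1\mid\xi_1\le m]=\big(G(m)-m\overline F(m)\big)/(1-\overline F(m))$, so the relative error is $O(\ell(m)/G(m))$, not $O(\overline F(m))$; this is much larger than what you wrote, but still gives $k\,|\E[\xi_1\mid\xi_1\le m]-G(m)|=O(n\ell(x_n))=o(x_n)$, so the conclusion survives. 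Finally, $\P(B_n)\sim n\overline F(ax_n)$ under only $\liminf x_n/(nG(x_n))>0$ is the content of the paper's Proposition~\ref{naga82} rather than of Theorem~\ref{main}(b), whose hypothesis also requires $\limsup<\infty$; cite accordingly or supply a short subsequence argument.
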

We will see in Proposition \ref{naga82} below that as $n\to\infty,$
$$\P\left(M_n\geq a_n x_n\right)\sim \P\left(S_{n}-nG(x_n)\geq a_nx_n, \,\,\, M_n\geq a_n x_n\right)\sim \P\left(S_{n}-nG(x_n)\geq a_nx_n\right).$$
Thus, the two events $\{M_n\geq a_n x_n\}$ and $\{S_{n}-nG(x_n)\geq a_nx_n\}$ are almost identical in the above sense. Furthermore, Theorem \ref{contri} shows that,  conditional on $M_n\geq a_nx_n$ or $S_n-nG(x_n)\geq a_nx_n$, the deviation of $S_n-nG(x_n)$ from $0$ is provided by the maximum variable. In fact, we see from Theorem~\ref{contri} that 
the sum of any subset of size $k$ of the other $n-1$ variables typically deviates from $kG(x_n)$ on a scale smaller than $x_n$.

The phenomenon that the maximum variable contributes substantially to $S_n$ while the others fluctuate around a certain level, when $S_n$ is large enough, can also be observed when $\overline F(x)$ is regularly varying with index strictly less than $-1$ and thus the $\xi_i$'s have a finite mean. This can be seen in the proof of the main result in \cite{Na82}, although the author did not formulate results like \eqref{condm} and \eqref{conds}. If the index lies in $(-1,0)$, this phenomenon does not occur, see (\cite{S03}, Lemma 22) for a study in the case of nonnegative random variables. 

The paper is organised as follows: In Section~\ref{sect:asymptS_n}, we prove the key result Propsition~\ref{naga82}. In Section~\ref{sect:asymptS_k}, we investigate how $S_k$ fluctuates around a certain level for $1\leq k\leq n$ and prove Theorem \ref{main}. Finally, Theorem~\ref{contri} is proved in Section~\ref{sect:ProofThm2}.

\section{Proofs}
\subsection{Asymptotic behaviour of $S_n$ and $M_n$}
\label{sect:asymptS_n}

\begin{lemma}\label{slow}
Let $g:[0,\infty)\mapsto [0,\infty)$ be a function tending to infinity at infinity. For any two slowly varying functions $h, h':[0,\infty)\mapsto [0,\infty)$, there exists $f:[0,\infty)\mapsto [0,\infty)$ such that $\lim_{x\to\infty} f(x)=\infty$, $\lim_{x\to\infty} f(x)/x=0$ and $\lim_{x\to\infty} f(x)g(x)/x=\infty$, and
\begin{equation}\label{hf}\lim_{x\to\infty}\frac{h(x)}{h(f(x))}=1, \quad \lim_{x\to\infty}\frac{h'(x)}{h'(f(x))}=1.\end{equation}
\end{lemma}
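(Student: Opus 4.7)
The plan is to exploit Karamata's representation theorem. It gives, for arguments beyond some common threshold $X_0$,
$$h(x)=c_h(x)\exp\Bigl(\int_{X_0}^x\frac{\epsilon_h(t)}{t}\,\d t\Bigr),\qquad h'(x)=c_{h'}(x)\exp\Bigl(\int_{X_0}^x\frac{\epsilon_{h'}(t)}{t}\,\d t\Bigr),$$
with $c_h(x),c_{h'}(x)$ converging to finite positive limits and $\epsilon_h(t),\epsilon_{h'}(t)\to 0$ as $t\to\infty$. Define
$$\eta(y):=\max\Bigl(\sup_{t\ge y}|\epsilon_h(t)|,\ \sup_{t\ge y}|\epsilon_{h'}(t)|,\ 1/\log y\Bigr),$$
a non-increasing function with $\eta(y)\to 0$; the harmless $1/\log y$ summand is inserted only to keep $\eta$ strictly positive. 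For any $f(x)\le x$ with $f(x)\to\infty$,
$$\Bigl|\log\frac{h(x)}{h(f(x))}\Bigr|\le\bigl|\log c_h(x)-\log c_h(f(x))\bigr|+\eta(f(x))\log\frac{x}{f(x)},$$
and analogously for $h'$. The first summand tends to $0$ automatically, so \eqref{hf} reduces to the single requirement $\eta(f(x))\log(x/f(x))\to 0$.

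Setting $\phi(x):=x/f(x)$, the conditions I must satisfy become $\phi(x)\to\infty$, $\phi(x)/g(x)\to 0$, $\phi(x)/x\to 0$, and $\eta(x/\phi(x))\log\phi(x)\to 0$. I propose
$$\phi(x):=\min\Bigl(\sqrt{x},\ \sqrt{g(x)},\ 1/\sqrt{\eta(\sqrt{x})}\Bigr)\qquad\text{for $x$ sufficiently large},$$
defining $\phi$ (and hence $f$) arbitrarily below that threshold. Each of the three entries tends to infinity, so $\phi(x)\to\infty$; moreover $\phi(x)/g(x)\le 1/\sqrt{g(x)}\to 0$ and $\phi(x)/x\le 1/\sqrt{x}\to 0$, handling the first three requirements.

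For the fourth, note that $\phi(x)\le\sqrt{x}$ gives $x/\phi(x)\ge\sqrt{x}$, hence $\eta(x/\phi(x))\le\eta(\sqrt{x})$; and $\phi(x)\le 1/\sqrt{\eta(\sqrt{x})}$ together with $\eta(\sqrt{x})\le 1$ eventually gives $0\le\log\phi(x)\le\tfrac12\log(1/\eta(\sqrt{x}))$. Combining,
$$\eta\bigl(x/\phi(x)\bigr)\log\phi(x)\le\tfrac12\,\eta(\sqrt{x})\log\bigl(1/\eta(\sqrt{x})\bigr)\longrightarrow 0,$$
since $u\log(1/u)\to 0$ as $u\downarrow 0$. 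The delicate step is precisely the choice of the third entry in $\phi$: it must be small enough that the uniform bound on $\epsilon_h,\epsilon_{h'}$ past $\sqrt{x}$ dominates the logarithmic factor $\log\phi(x)$, and the identity $u\log(1/u)\to 0$ is exactly what makes these two constraints simultaneously satisfiable; the other two entries in the minimum handle the auxiliary rate conditions $f(x)\to\infty$ and $f(x)g(x)/x\to\infty$.
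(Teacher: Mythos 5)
Your proof is correct, but it follows a genuinely different route from the paper. The paper works directly from the definition of slow variation: for each integer $n$ it picks a threshold $k_n$ beyond which $|h(x)/h(x/n)-1|\le 2^{-n}$ (and likewise for $h'$), arranges the $k_n$ to grow fast enough relative to $g$, and sets $f(x)=x/n$ piecewise on the resulting intervals — a diagonal construction that needs nothing beyond the pointwise limits $h(\lambda x)/h(x)\to1$. You instead invoke Karamata's representation theorem, which collapses \eqref{hf} into the single quantitative criterion $\eta(f(x))\log(x/f(x))\to0$, and then exhibit $f$ in closed form via $\phi(x)=\min\bigl(\sqrt{x},\sqrt{g(x)},1/\sqrt{\eta(\sqrt{x})}\bigr)$, with $u\log(1/u)\to0$ doing the decisive work. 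Your argument is clean and all the estimates check out (including the eventual validity of $f(x)\ge X_0$, $\phi(x)\ge1$, and $\eta(\sqrt{x})\le1$ that the logarithm bounds implicitly require); its one extra cost is that the representation theorem presupposes measurability of $h$ and $h'$, which the lemma's bare statement does not mention — harmless here, since the paper applies the lemma to $\ell$ and $G$, both measurable, and the Karamata-style definition of slow variation used throughout (via BGT) includes measurability. In exchange you get an explicit $f$ and a transparent error bound, whereas the paper's construction is more elementary but yields $f$ only implicitly through the sequence $(k_n)$.
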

\begin{proof}
%Define $g^*:[0,\infty)\mapsto [0,\infty)$ by $g^*(x)=\inf_{y\geq x} g(y).$ By assumption, $g^*(x)$ increases to infinity at infinity. 

Since $h$ and $h'$ are slowly varying, there exist sequences of integers $(k_n)_{n\geq 1}, (k_n')_{n\geq 1}$ strictly increasing to infinity such that 
\begin{equation}\label{nx} \left|\frac{h(x)}{h(x/n)}-1\right|\leq 2^{-n}, \,\, \forall n\in \N_+, x\geq k_n;\quad \left|\frac{h'(x)}{h'(x/n)}-1\right|\leq 2^{-n}, \,\, \forall n\in \N_+, x\geq k_n'\end{equation}
and (by possibly increasing $k_n$ and $k_n'$ suitably) we may assume that also
$$\lim_{n\to\infty}\frac{g(k_n)}{n}=\infty,\quad \lim_{n\to\infty}\frac{g(k_n')}{n}=\infty,\quad \lim_{n\to\infty}\frac{k_n}{n}=\infty,\quad \lim_{n\to\infty}\frac{k_n'}{n}=\infty.$$

Define $f(x)=x$ for $0 \le x <  \max\{k_1,k_1'\}$ and 
$$f(x)=x/n \quad \text{for } x\in \left[\max\{k_n,k_n'\}, \max\{k_{n+1},k_{n+1}'\}\right), \; n \in \N_+.$$
%; \quad f(x)=x,\quad \forall x\in\left[0,\max\{k_1,k_1'\}\right),$$
By construction, $\lim_{x\to\infty}f(x)=\infty$, $\lim_{x\to\infty}f(x)/x=0$ and $\lim_{x\to\infty} f(x)g(x)/x=\infty$. %Since $g(x)\geq g^*(x)$ for any $x\geq 0$, we have $\lim_{x\to\infty} f(x)g(x)/x=\infty$. 
Thus \eqref{nx} implies \eqref{hf}, completing the proof. 
\end{proof}

%{\tt 29.3.2021: We do not think that %this is necessary (and there are %counterexamples)}
%Since $\overline F$ is regularly varying %with index $-1$, we get 
%$$\lim_{x\to\infty}G(x)=\infty.$$
%Moreover using (\cite{BGT89}, Proposition 1.5.9a, page 26),  $G(x)$ is a slowly varying function as $x\to\infty$ and 
%\begin{equation}\label{gl}\lim_{x\to\infty}\frac{G(x)}{\ell(x)}=\infty.\end{equation}
%Let $C', C$ be two positive numbers such that 
%$$C' x^{-1}\leq \overline F(x)\leq C x^{-1}, \quad \forall %\,x>1.$$
\begin{proposition}\label{naga82}
Let $(x_n)_{n\geq 1}$, $(a_n)_{n\geq 1}$ and $a = \lim_n a_n$ be as defined in Theorem \ref{contri}. Then as $n\to\infty,$ 
\begin{align}\label{sim}\P(S_n-nG(x_n) \geq a_nx_n)&\sim \P(S_n-nG(x_n) \geq a_nx_n, \,\,\, M_n\geq a_nx_n)&\nonumber\\
&\sim \P(M_n\geq a_nx_n)\sim a^{-1}n\overline F(x_n)=o(1).&\end{align}
\end{proposition}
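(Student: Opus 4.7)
\textbf{Proof proposal for Proposition~\ref{naga82}.}

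The plan is to establish the right-most equivalence first, then prove upper and lower bounds for $\P(S_n-nG(x_n)\geq a_n x_n)$ separately, and finally derive the joint-event equivalence as a consequence. A preliminary observation is that $\liminf x_n/(nG(x_n))>0$ forces $n\overline F(x_n) \to 0$: Karamata's theorem yields $\ell(x)/G(x) \to 0$ when $G(\infty) = \infty$ (and in the finite-mean case $\ell(x) = x\overline F(x) \to 0$ automatically, since $\int_0^\infty \overline F < \infty$), so $n\overline F(x_n) = n\ell(x_n)/x_n \leq C\ell(x_n)/G(x_n) \to 0$. Regular variation of $\overline F$ of index $-1$ together with $a_n \to a$ then gives $\P(M_n \geq a_n x_n) = 1-(1-\overline F(a_n x_n))^n \sim n\overline F(a_n x_n) \sim a^{-1} n\overline F(x_n)$.

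For the upper bound on $\P(E)$ with $E := \{S_n - nG(x_n) \geq a_n x_n\}$, introduce a fixed small $\eta \in (0, a)$ (to be sent to $0$ at the end) and let $N := \#\{i : \xi_i \geq \eta x_n\}$. Then $\P(N \geq 2) \leq (n\overline F(\eta x_n))^2/2 \sim (\eta^{-1} n\overline F(x_n))^2/2 = o(n\overline F(x_n))$. On $\{N=0\}$, $S_n$ equals the truncated sum $S^\# := \sum_i \xi_i \mathbf{1}_{\xi_i<\eta x_n}$ whose mean is $nG(x_n) + o(x_n)$ (slow variation of $G$ and $\ell$ on the fixed scale $\eta$, plus $n\ell(x_n) = o(x_n)$) and whose variance is at most $Cn\eta x_n\ell(\eta x_n) \sim Cn\eta x_n^2 \overline F(x_n)$ by Karamata applied to $t\overline F(t) = \ell(t)$; Chebyshev then bounds $\P(E, N=0) \leq (C'\eta/a^2)\,n\overline F(x_n)$. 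On $\{N=1\}$, conditioning on the unique exceedance $\xi_1 = z \geq \eta x_n$, the remaining sum $T$ of $n-1$ i.i.d.\ copies of $\xi_1 \mid \xi_1 < \eta x_n$ satisfies $\E[T] = nG(x_n) + o(x_n)$ and $\P(|T-\E[T]| \geq \epsilon x_n) = O(\eta/\epsilon^2)\,n\overline F(x_n)$. Splitting the $z$-integral at $a_n x_n - \epsilon x_n$, the low-$z$ contribution is $\leq n\overline F(\eta x_n) \cdot O(\eta/\epsilon^2)\,n\overline F(x_n) = O((n\overline F(x_n))^2)$, while the high-$z$ contribution is $\leq n\overline F(a_n x_n - \epsilon x_n) \sim (a-\epsilon)^{-1} n\overline F(x_n)$. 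Sending first $\eta \to 0$ and then $\epsilon \to 0$ gives $\limsup \P(E)/(n\overline F(x_n)) \leq a^{-1}$.

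The matching lower bound uses the inclusion
$$\{M_n \in [(a+2\epsilon)x_n, Kx_n]\} \cap \{S_n - M_n - (n-1)G(x_n) \geq -\epsilon x_n\} \subseteq E,$$
valid for large $n$ because $G(x_n) = o(x_n)$ and $a_n \to a$. Conditional on $M_n = m \in [(a+2\epsilon)x_n, Kx_n]$, the other $n-1$ i.i.d.\ $\xi_i \mid \xi_i \leq m$ have conditional mean $G(m)(1+o(1)) = G(x_n)(1+o(1))$ (slow variation of $G$ over the compact ratio range $[a+2\epsilon, K]$) and second moment $O(m\ell(m)) = O(Kx_n^2 \overline F(x_n))$ (Karamata), so Chebyshev provides uniform control in $m$ and yields $\P(E) \geq (1-o(1))\big((a+2\epsilon)^{-1} - K^{-1}\big)\,n\overline F(x_n)$. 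Sending $K \to \infty$ and $\epsilon \to 0$ finishes $\P(E) \sim a^{-1} n\overline F(x_n)$. The remaining equivalence $\P(E, M_n\geq a_n x_n) \sim \P(E)$ follows from $\P(E, M_n < a_n x_n - \delta x_n) = o(n\overline F(x_n))$ (reuse the $N$-decomposition, restricting the single-exceedance integral to $z\leq a_n x_n - \delta x_n$, and apply Chebyshev on $T$) combined with $\P(M_n \in [a_n x_n - \delta x_n, a_n x_n]) = O(\delta)\,n\overline F(x_n) \to 0$ as $\delta \to 0$. The main obstacle is the single-exceedance analysis in the upper bound: only the two-parameter $(\eta,\epsilon)$ scheme renders the truncated-sum variance negligible while simultaneously exploiting the regular variation of $\overline F$ near $a_n x_n$ sharply enough to produce the constant $a^{-1}$ rather than merely an $O(1)$ bound.
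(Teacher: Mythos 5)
Your argument is correct, but it takes a genuinely different technical route from the paper's. Both proofs are one‑big‑jump decompositions (no exceedance / exactly one exceedance / at least two exceedances of a truncation level, with Chebyshev controlling the truncated remainder), and both lower bounds plant a single large summand and concentrate the rest. The difference lies in the truncation level. You truncate at the fixed proportional level $\eta x_n$ and accept error terms of size $O(\eta)\,n\overline F(x_n)$ (for $N=0$) and $O\bigl((n\overline F(x_n))^2\bigr)$ (for $N\ge 2$ and the low‑$z$ part of $N=1$), removing the former by a final limit $\eta\to 0$ together with $\epsilon\to 0$, $K\to\infty$, $\delta\to 0$. The paper instead truncates at $x_n/y_n$ with $y_n\to\infty$ chosen via the selection Lemma~\ref{slow} so that $\ell(x_n/y_n)\sim\ell(x_n)$, $G(x_n/y_n)\sim G(x_n)$ and $ny_n^2\overline F(x_n)\to 0$ hold simultaneously; with that choice every error term is genuinely $o(n\overline F(x_n))$ in a single pass, at the cost of the extra lemma. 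Your version is more self‑contained and arguably more transparent. What the paper's heavier construction buys is reusability: the divergent $y_n$ and the variance bound \eqref{C} feed directly into Corollary~\ref{twoscor} and into Remark~\ref{better} (with conditional probability tending to one, exactly one summand exceeds $a_nx_n$ while \emph{all} others are below $x_n/y_n=o(x_n)$ — a statement a fixed‑$\eta$ truncation cannot deliver). Two small points to tidy up in your write‑up: you should record explicitly that \eqref{xn} forces $x_n\to\infty$ (needed before invoking $\ell(x_n)/G(x_n)\to 0$, and argued in the paper via $G(x)\sim \overline F(0+)x$ near $0$) and that $nG(x_n)=O(x_n)$ (needed to convert the relative error in $\E[S^\#]$ and in the conditional means of your lower bound into $o(x_n)$); both are immediate from \eqref{xn}. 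The conditioning "on $M_n=m$" in your lower bound should also be phrased via the index of the maximum (as the paper does with $\xi_n$) to avoid issues with atoms, but this is routine.
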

\begin{proof}
First of all, let us observe what condition \eqref{xn} implies for $(x_n)_{n\geq 1}$. Note that $G(x)\sim \overline F(0+)x$ as $x\to 0$. Therefore 
$$\lim_{n\to\infty}\frac{x_n'}{nG(x_n')}=0$$
for any sequence $x_n' \to 0$. 
In particular, \eqref{xn} enforces
$$\liminf_{n\to\infty}x_n>0.$$
This together with \eqref{xn} implies that in fact
\begin{equation}\label{xinf}\lim_{n\to\infty}x_n=\infty.\end{equation}
Since $\overline F$ is regularly varying with index $-1$, using Karamata's theorem (\cite{BGT89}, Theorem 1.5.11, page 28), we get 
\begin{equation}\label{gxf}\lim_{x\to\infty}\frac{G(x)}{x \overline F(x)}=\lim_{x\to\infty}\frac{G(x)}{\ell(x)}=\infty.\end{equation}
Together with \eqref{xn}, we obtain  
\begin{equation}\label{nv}\lim_{n\to\infty}n\overline F(x_n)=0,\end{equation}
which is the right-most claim in (the second line of) \eqref{sim}.

For the maximum value $M_n$, it is straightforward to see that 
\begin{equation}\label{mninf}\P(M_n\geq a_nx_n)=1-(1-\P(\xi_1\geq a_nx_n))^n\sim n\overline F(ax_n)\sim a^{-1}n\overline F(x_n)=o(1), \quad \text{ as } n\to\infty.\end{equation}
Here we used \eqref{nv} and the uniform convergence theorem for regularly varying functions (\cite{BGT89}, Theorem 1.2.1, page 6), as $a_n\to a\in(0,\infty)$.
It remains to show that as $n\to\infty$
\begin{equation}\label{remain}\P(S_n-nG(x_n) \geq a_nx_n)\sim \P(S_n-nG(x_n) \geq a_nx_n, \,\,\, M_n\geq a_nx_n)\sim a^{-1}n\overline F(x_n).\end{equation}

Since $\ell(x)$ is slowly varying as $x\to\infty$,  $G(x)$ is also slowly varying as $x\to\infty$, see (\cite{BGT89}, Proposition 1.5.9b, page 27). We can choose a positive sequence $(y_n)_{n\geq 1}$ such that as $n\to\infty,$
\begin{align}&y_n\to\infty, \quad y_n=o(x_n),&\label{yinf}\\&ny_n^2\overline F(x_n)=o(1),&\label{nyv}\\
&\ell\left(\frac{x_n}{y_n}\right)\sim \ell(x_n), \text{ or equivalently,  } \overline F\left(\frac{x_n}{y_n}\right)\sim y_n\overline F(x_n),&\label{ll}\\
&G\left(\frac{x_n}{y_n}\right)\sim G(x_n).&\label{gslow}
\end{align}
Such a sequence exists by Lemma \ref{slow}: Define $m_{x_n}:=\sup\{j: x_j=x_n, j\geq 1\}$ for $n\geq 1$. Note that ($n \le$)\,$m_{x_n}$  is finite for all $n$ due to \eqref{xinf}. By \eqref{nv}, there exists a function $g:[0,\infty)\mapsto [0,\infty)$ tending to infinity at infinity such that $g(x_n)=\frac{1}{\sqrt{m_{x_n}\overline F(x_n)}}, \forall n\geq 1.$ Let $h=l, h'=G.$ Applying Lemma \ref{slow}, there exists $f$ such that 
$$\lim_{x\to\infty} f(x)=\infty, \,\lim_{x\to\infty} f(x)/x=0,\, \lim_{x\to\infty} f(x)g(x)/x=\infty,\, \lim_{x\to\infty}\frac{h(x)}{h(f(x))}=1, \, \lim_{x\to\infty}\frac{h'(x)}{h'(f(x))}=1.$$
Then it suffices to set $y_n=x_n/f(x_n), n\geq 1$. 

Define \begin{equation}\label{defxi'}\xi_i':=\xi_i\1_{\xi_i< \frac{x_n}{y_n}}, \quad i\geq 1.\end{equation} Since $\P(\xi_1'\geq \frac{x_n}{y_n})=0,$ we use integration by parts to obtain that as $n\to\infty,$
\begin{align}\label{xi'}\E[\xi_1']=\int_0^\infty\P(\xi_1'\geq y) \, dy&=\int_{0}^{\frac{x_n}{y_n}}\Big(\P(\xi_1\geq y)-\P(\xi_1\geq \frac{x_n}{y_n})\Big) \, dy&\nonumber\\
&=G\left(\frac{x_n}{y_n}\right)-\frac{x_n}{y_n}\overline F\left(\frac{x_n}{y_n}\right)&\nonumber\\
&=G\left(\frac{x_n}{y_n}\right)-\ell\left(\frac{x_n}{y_n}\right)\sim G(x_n)-\ell(x_n)\sim G(x_n).&\end{align}
For the two equivalences in the last line, we used \eqref{ll}, \eqref{gslow}, and \eqref{xinf}, \eqref{gxf}, respectively.

Define \begin{equation}\label{defsk'}S_{k}':=\sum_{i=1}^{k}\xi_i', \quad k\geq 1.\end{equation} Then by \eqref{xi'}, we have $\E[S_{n}']\sim nG(x_n).$ Moreover 
\begin{equation}\label{xis'}\var(\xi_1')\leq \E[\xi_1'^2]=\int_0^\infty 2y\P(\xi_1'\geq y)\,dy\leq \int_0^{\frac{x_n}{y_n}} 2y\P(\xi_1\geq y)\,dy\sim \frac{2x_n^2\overline F(x_n)}{y_n}.\end{equation}
For the last equivalence, we used again Karamata's theorem and \eqref{ll}. Let $C>0$ satisfy that 
\begin{equation}\label{C}\var(\xi_1')\leq C\frac{x_n^2\overline F(x_n)}{y_n},\quad \forall n\geq 1.\end{equation}

%Then  we have 
%\begin{equation}\label{xis'}\var(\xi_1')\leq 2C\frac{x_n}{y_n}; \quad \var(S'_{k})=k\var(\xi_1')\leq 2Ck\frac{x_n}{y_n}, \,\,\forall k=1,2,\ldots.\end{equation}

Let $0<s<1.$ Consider the following decomposition:
\begin{align}\label{1234}
&\quad \P(S_n-nG(x_n) \geq a_nx_n)&\nonumber\\
&=n\P\left(S_n-nG(x_n) \geq a_nx_n, \,\,\,\xi_n\geq sa_nx_n,\,\,\, \max_{1\leq i\leq n-1}\xi_i< \frac{x_n}{y_n}\right)&\nonumber\\
&\quad \quad +n\P\left(S_n-nG(x_n) \geq a_nx_n, \,\,\, \frac{x_n}{y_n} \leq \xi_n< sa_nx_n, \,\,\, \max_{1\leq i\leq n-1}\xi_i< \frac{x_n}{y_n}\right)&\nonumber\\
&\quad\quad\quad \quad +\P\left(S_n-nG(x_n) \geq a_nx_n, \,\,\, \max_{1\leq i\leq n}\xi_i< \frac{x_n}{y_n}\right)&\nonumber\\
&\quad\quad\quad \quad \quad\quad+\P\left(S_n-nG(x_n) \geq a_nx_n, \,\,\, \exists \,1\leq i<j\leq n \text{ such that } \,\xi_i\geq \frac{x_n}{y_n},\xi_j\geq \frac{x_n}{y_n}\right)&\nonumber\\
&=I_1+I_2+I_3+I_4&
\end{align}
where $I_i$ is the $i$-th term on the right side for $1\leq i\leq 4$. Let us look at $I_4$ first. Note that 
\begin{align}\label{i2}0\leq I_4&\leq {n\choose 2}\P\left(\xi_1\geq  \frac{x_n}{y_n}, \,\,\, \xi_2\geq  \frac{x_n}{y_n}\right)\leq n^2\P\left(\xi_1\geq  \frac{x_n}{y_n}\right)\P\left(\xi_2\geq  \frac{x_n}{y_n}\right)&\nonumber\\
&= n^2\overline F^2\left(\frac{x_n}{y_n}\right)\sim n^2y_n^2\overline F^2(x_n)=o(n\overline F(x_n)), \text{ as }n\to\infty.&\end{align}
For the last two equivalences, we used \eqref{ll} and \eqref{nyv}, respectively. 

Now we consider $I_2.$ Since $\E[S'_{n-1}]\sim nG(x_n)$, using \eqref{xn} and $a_n\to a>0$, we have  
$$\left\{S'_{n-1}\geq nG(x_n)+(1-s)a_nx_n\right\} \mbox{\Large$\subset$}\left\{S'_{n-1}-\E[S'_{n-1}]\geq \frac{1-s}{2}a_nx_n\right\} \quad \text{ for } n  \text{ large enough.}$$
Then we obtain that
\begin{align}\label{i3}
0\leq I_2&\leq n\P\left(\xi_n\geq \frac{x_n}{y_n},  \,\,\, S_{n-1}-nG(x_n) \geq (1-s)a_nx_n, \,\,\, \max_{1\leq i\leq n-1}\xi_i< \frac{x_n}{y_n}\right)&\nonumber\\
&=n\P\left(\xi_n\geq \frac{x_n}{y_n},\,\,\,  S'_{n-1}-nG(x_n) \geq (1-s)a_nx_n, \,\,\,\max_{1\leq i\leq n-1}\xi_i< \frac{x_n}{y_n}\right)&\nonumber\\
&=n\P\left(\xi_n\geq \frac{x_n}{y_n}\right)\P\left(S'_{n-1}-nG(x_n) \geq (1-s)a_nx_n,\,\,\, \max_{1\leq i\leq n-1}\xi_i< \frac{x_n}{y_n}\right)&\nonumber\\
&\leq n\P\left(\xi_n\geq \frac{x_n}{y_n}\right)\P(S'_{n-1}\geq nG(x_n)+(1-s)a_nx_n)&\nonumber\\
&\leq n\overline F\left(\frac{x_n}{y_n}\right)\P\left(S'_{n-1}-\E[S'_{n-1}]\geq \frac{1-s}{2}a_nx_n\right)&\nonumber\\
&\leq n\overline F\left(\frac{x_n}{y_n}\right)\frac{\var(S'_{n-1})}{\left(\frac{1-s}{2}a_nx_n\right)^2}&\nonumber\\
&\leq C\left(\frac{2}{(1-s)a_n}\right)^2\frac{n^2\overline F(x_n)}{y_n}\overline F\left(\frac{x_n}{y_n}\right)&\nonumber\\
&\sim C\left(\frac{2}{(1-s)a}\right)^2(n\overline F(x_n))^2=o(n\overline F(x_n)),\quad \text{ as }n\to\infty.&
\end{align}
For the last inequality we applied \eqref{C}. We used \eqref{ll} and \eqref{nv} respectively for the first and second equivalence in the last line. 

We apply the same approach to $I_3$:
\begin{equation}\label{i4}0\leq I_3\leq \P\left(S'_n-nG(x_n) \geq \frac{a_nx_n}{2}\right)\leq \P\left(S'_{n}-\E[S'_n]\geq \frac{a_nx_n}{4}\right)\leq \frac{16C}{a_n^2}\frac{n\overline F(x_n)}{y_n}=o(n\overline F(x_n)),\end{equation}
where we used $\lim_{n\to\infty}y_n=\infty$ in the right-most equality.

Now we proceed to estimate $I_1$, recall that it remains to show that $I_1 \sim \frac{1}{a} n \overline{F}(x_n)$.
Let $0<\delta<1-s.$
\begin{align}\label{1123}
I_1&=n\P\left(S_n-nG(x_n) \geq a_nx_n, \,\,\, \xi_n\geq sa_nx_n,\,\,\, \max_{1\leq i\leq n-1}\xi_i< \frac{x_n}{y_n}\right)&\nonumber\\
&= n\P\left(\xi_n+S'_{n-1}-nG(x_n) \geq a_nx_n, \,\,\,\xi_n\geq sa_nx_n, \,\,\,\max_{1\leq i\leq n-1}\xi_i< \frac{x_n}{y_n}\right)&\nonumber\\
&=n\P\left(\xi_n+S'_{n-1}-nG(x_n) \geq a_nx_n, \,\,\,\xi_n\geq sa_nx_n\right)&\nonumber\\
&\quad\quad\quad  -n\P\left(\xi_n+S'_{n-1}-nG(x_n) \geq a_nx_n,\,\,\, \xi_n\geq sa_nx_n, \,\,\,\max_{1\leq i\leq n-1}\xi_i\geq \frac{x_n}{y_n}\right)&\nonumber\\
&= n\P\left(\xi_n+S'_{n-1}-nG(x_n) \geq a_nx_n,\,\,\, \xi_n\geq sa_nx_n, \,\,\,\left|\frac{S'_{n-1}-\E[S'_{n-1}]}{a_nx_n}\right|< \delta\right)&\nonumber\\
&\quad\quad\quad +n\P\left(S'_{n-1}-nG(x_n) \geq a_nx_n,\,\,\, \xi_n\geq sa_nx_n, \,\,\,\left|\frac{S'_{n-1}-\E[S'_{n-1}]}{a_nx_n}\right|\geq \delta\right)&\nonumber\\
&\quad\quad\quad\quad\quad  -n\P\left(\xi_n+S'_{n-1}-nG(x_n) \geq a_nx_n, \,\,\,\xi_n\geq sa_nx_n, \,\,\,\max_{1\leq i\leq n-1}\xi_i\geq \frac{x_n}{y_n}\right)&\nonumber\\
&=I_{11}+I_{12}+I_{13}&
\end{align}
where $I_{1i}$ is the $i$-th term on the right side for $i=1,2,3.$ 
Note that 
\begin{align*}
0\leq I_{12}&\leq n\P\left(\xi_n\geq sa_nx_n, \,\,\,\left|\frac{S'_{n-1}-\E[S'_{n-1}]}{a_nx_n}\right|\geq \delta\right)&\\
&=n\P(\xi_n\geq sa_nx_n) \P\left(\left|\frac{S'_{n-1}-\E[S'_{n-1}]}{a_nx_n}\right|\geq \delta\right)&\\
&\leq n\overline F(sa_nx_n)\frac{\var(S'_{n-1})}{a_n^2\delta^2x_n^2}\leq n\overline F(sa_nx_n)\frac{Cn\overline F(x_n)}{a_n^2\delta^2y_n}\sim \frac{Cn^2\overline F^2(x_n)}{sa_n^3\delta^2y_n}=o(n\overline F(x_n)).&
\end{align*}
In the last inequality we used \eqref{C}. For the two equivalences, we applied the uniform convergence theorem of regularly varying functions and  \eqref{yinf}, \eqref{nv}. 

Note that 
$$\P\left(\max_{1\leq i\leq n-1}\xi_i\geq \frac{x_n}{y_n}\right)=1-\left(1-\P\left(\xi_i\geq \frac{x_n}{y_n}\right)\right)^{n-1}\sim n\overline F\left(\frac{x_n}{y_n}\right)\sim ny_n\overline F(x_n).$$
Thus we can estimate $|I_{13}|$ as follows: 
\begin{align}\label{i13}
0\leq |I_{13}|&\leq n\P\left(\xi_n\geq sa_nx_n, \max_{1\leq i\leq n-1}\xi_i\geq \frac{x_n}{y_n}\right)&\nonumber\\
&=n\overline F(sa_nx_n)\P\left(\max_{1\leq i\leq n-1}\xi_i\geq \frac{x_n}{y_n}\right)\sim \frac{1}{sa}n^2y_n\overline F^2(x_n)=o(n\overline F(x_n)).
\end{align}
For the last term we applied \eqref{nyv} and \eqref{yinf}.

Finally, we consider the main term $I_{11}$. For any $\epsilon>0$ such that $\delta(1+\epsilon)<1-s$, using \eqref{xn} and $a_n\to a>0$, it follows that $\left|\frac{S'_{n-1}-\E[S'_{n-1}]}{a_nx_n}\right|< \delta$ implies 
\begin{equation}\label{<s<}nG(x_n)-\delta(1+\epsilon)a_nx_n<S'_{n-1}< nG(x_n)+\delta(1+\epsilon)a_nx_n, \quad  \text{for } n \text{ large enough}.\end{equation}
Then we use the second inequality to obtain 
$$\left\{\xi_n+S'_{n-1}-nG(x_n) \geq a_nx_n, \,\,\,\xi_n\geq sa_nx_n, \,\,\,\left|\frac{S'_{n-1}-\E[S'_{n-1}]}{a_nx_n}\right|< \delta\right\}\mbox{\Large$\subset$}\{\xi_n\geq (1-\delta(1+\epsilon))a_nx_n\}.$$
The above display leads to
\begin{align}\label{i11<}
0\leq I_{11}\leq n\P(\xi_n\geq (1-\delta(1+\epsilon))a_nx_n)\sim (1-\delta(1+\epsilon))^{-1}a^{-1}n\overline F(x_n).\end{align}
By the first inequality of \eqref{<s<}, we have
\begin{align*}
&\left\{\xi_n\geq (1+\delta(1+\epsilon))a_nx_n, \left|\frac{S'_{n-1}-\E[S'_{n-1}]}{a_nx_n}\right|< \delta\right\}&\\
&\quad\quad\quad \mbox{\Large$\subset$} \left\{\xi_n+S'_{n-1}-nG(x_n) \geq a_nx_n, \,\,\,\xi_n\geq sa_nx_n, \left|\frac{S'_{n-1}-\E[S'_{n-1}]}{a_nx_n}\right|< \delta\right\}, \quad  \text{for } n \text{ large enough}.\end{align*}
Then 
\begin{align}\label{i11>}
I_{11}&\geq n\P\left(\xi_n\geq (1+\delta(1+\epsilon))a_nx_n, \left|\frac{S'_{n-1}-\E[S'_{n-1}]}{a_nx_n}\right|< \delta\right)&\nonumber\\
&=n\P(\xi_n\geq (1+\delta(1+\epsilon))a_nx_n)\P\left(\left|\frac{S'_{n-1}-\E[S'_{n-1}]}{a_nx_n}\right|< \delta\right)&\nonumber\\
&\geq n\overline F\left((1+\delta(1+\epsilon))a_nx_n\right)\left(1-\frac{\var(S_{n-1}')}{(\delta a_n x_n)^2}\right)&\nonumber\\
&\geq n\overline F\left((1+\delta(1+\epsilon))a_nx_n\right)\left(1-\frac{Cn\overline F(x_n)}{\delta^2 a_n^2y_n}\right)\sim (1+\delta(1+\epsilon))^{-1}a^{-1}n\overline F(x_n).&
\end{align}
To achieve the last two steps we used similar arguments as in the estimation of $I_{12}$. 
 
Taking into account (\ref{1234})-\eqref{i13} and \eqref{i11<}-(\ref{i11>}) and choosing $s$ close enough to $1$ (so $\delta$ close to $0$),  we obtain that 
\begin{align}\label{2equi}
  & \P\left(S_n-nG(x_n) \geq a_nx_n\right)  \nonumber\\
  & \qquad \sim a^{-1}n\overline F(x_n)
    \sim n\P\left(S_n-nG(x_n) \geq a_nx_n, \,\,\, M_n=\xi_n\geq a_nx_n,\,\,\, \max_{1\leq i\leq n-1}\xi_i< \frac{x_n}{y_n}\right).\end{align}
Moreover the third term is bounded above by 
$$\P(S_n-nG(x_n) \geq a_nx_n, \,\,\, M_n\geq a_nx_n).$$
The above term is  again bounded above by 
$$\P(M_n\geq a_nx_n).$$
Then by \eqref{2equi} and \eqref{mninf}, we get 
$$\P(S_n-nG(x_n) \geq a_nx_n, \,\,\, M_n\geq a_nx_n)\sim a^{-1}n\overline F(x_n).$$
This completes the proof of \eqref{remain} hence also of Proposition~\ref{naga82}.
\end{proof}
\begin{remark}\label{better}
The two equivalences in \eqref{2equi} entail that, as $n\to\infty$
$$\P\left(\parbox{0.39\textwidth}{Only one term is larger than $a_nx_n,$\\the rest are all smaller than  $x_n/y_n$}
  % \text{Only one term larger than } a_nx_n, \text{ the rest are all smaller than }  \frac{x_n}{y_n}
  \: \Big| \: S_n-nG(x_n) \geq a_nx_n\right)\to 1.$$
Applying the second equivalence in \eqref{2equi} and using the subset relationship, we have  
$$n\P\left(M_n=\xi_n\geq a_nx_n, \max_{1\leq i\leq n-1}\xi_i< \frac{x_n}{y_n}\right)\sim a^{-1}n\overline F(x_n)\sim \P(M_n \geq a_nx_n), \text{ as } n\to\infty.$$
In other words, as $n\to\infty$
$$\P\left(\parbox{0.39\textwidth}{Only one term is larger than $a_nx_n,$\\the rest are all smaller than  $x_n/y_n$}
  % \text{Only one term larger than } a_nx_n, \text{ the rest are all smaller than }  \frac{x_n}{y_n}
  \: \Big| \: \: M_n\geq a_nx_n\right)\to 1.$$
\end{remark}
%It is not very clear from Proposition \ref{naga82} and the above remark to see how big the contributions to $S_n$ from different terms, especially between the maximum value and the others, conditional on $\{S_n-nG(x_n) \geq a_nx_n\}$. We discuss this problem in the next section and prove Theorem \ref{contri}. 

\subsection{Asymptotic behaviour of $S_k$}
\label{sect:asymptS_k}

In this section, we investigate the behaviour of $S_k-kG(x_n)$ for $1\leq k\leq n$. In particular, we will see that
in the regime we consider, uniformly in $k$, $|S_k-kG(x_n)|$ will typically be much smaller than $a_nx_n$.
After that we will be able to prove Theorem \ref{main}. 
 
\begin{corollary}\label{firstmax}
Let $(x_n)_{n\geq 1}$, $(a_n)_{n\geq 1}, a$ be as defined in Theorem \ref{contri}. Then as $n\to\infty$
$$\max_{1\leq k\leq n}\P(S_k-kG(x_n)\geq a_nx_n)=O(n\overline F(x_n)).$$
%and 
%$$\max_{1\leq k\leq n}\P(S_k-kG(x_n)\leq -a_nx_n)=o(nx_n^{-1}). $$
\end{corollary}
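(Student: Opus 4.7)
The plan is to redo the four-way decomposition used in the proof of Proposition~\ref{naga82}, but applied to $S_k$ in place of $S_n$, while keeping the same auxiliary sequence $(y_n)$, threshold $G(x_n)$, and truncation level $x_n/y_n$. Recalling $\xi_i' = \xi_i \1_{\xi_i < x_n/y_n}$ and $S_k' = \sum_{i=1}^k \xi_i'$, and fixing some $s \in (0,1)$, I would split $\P(S_k - kG(x_n) \ge a_n x_n)$ into four terms according to how many of $\xi_1,\dots,\xi_k$ exceed $x_n/y_n$ and (in the case of exactly one) whether that large summand is above or below $s a_n x_n$, exactly mirroring $I_1,I_2,I_3,I_4$ in \eqref{1234}.

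The analogue of $I_1$ is bounded by a union bound by $k\,\overline F(s a_n x_n) \le n\,\overline F(s a_n x_n) = O(n\overline F(x_n))$ via the uniform convergence theorem and $a_n\to a$; this piece alone accounts for the claimed $O(n\overline F(x_n))$ and is the only term which does not decay faster than $n\overline F(x_n)$. The analogue of $I_4$ is controlled by $\binom{k}{2}\overline F^2(x_n/y_n) \le n^2 y_n^2 \overline F^2(x_n) = o(n\overline F(x_n))$, using \eqref{ll} and \eqref{nyv}. The analogues of $I_2$ and $I_3$ are then treated by essentially the same Chebyshev arguments as in \eqref{i3} and \eqref{i4}, carrying a factor $k$ rather than $n$ and bounding $k\le n$ at the end; using \eqref{C}, both yield bounds of order $n^2\overline F^2(x_n)$ or $n\overline F(x_n)/y_n$, hence $o(n\overline F(x_n))$.

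The only subtle point is the uniformity in $k$ of the analogue of $I_3$: one needs $\{S_k' - kG(x_n) \ge a_n x_n\} \subset \{S_k' - \E[S_k'] \ge a_n x_n/2\}$ for all $k \le n$ once $n$ is large, and similarly for the analogue of $I_2$ after conditioning on the identity of the large summand. This reduces to verifying $\sup_{k\le n} k\,|\E[\xi_1'] - G(x_n)| = o(a_n x_n)$. From the computation in \eqref{xi'} we have $\E[\xi_1'] - G(x_n) = \bigl(G(x_n/y_n) - G(x_n)\bigr) - \ell(x_n/y_n) = o(G(x_n))$ by \eqref{ll}, \eqref{gslow} and \eqref{gxf}, and condition \eqref{xn} forces $nG(x_n) = O(x_n)$, so the supremum is $n \cdot o(G(x_n)) = o(nG(x_n)) = o(x_n) = o(a_n x_n)$, as required. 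This uniformity in $k$ is the main (rather minor) hurdle; once it is in place, the rest of the argument transcribes almost line by line from the proof of Proposition~\ref{naga82}, delivering $\max_{1\le k\le n}\P(S_k - kG(x_n) \ge a_n x_n) = O(n\overline F(x_n))$.
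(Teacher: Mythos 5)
Your argument is correct and rests on the same mechanism as the paper's own proof: truncate the summands, union-bound the event that some summand is large (this is the dominant $O(n\overline F(x_n))$ contribution), apply Chebyshev to the truncated sum, and verify the uniformity $k\,|\E[\xi_1']-G(x_n)|=o(x_n)$ over $k\le n$ via \eqref{xn}. The paper uses a simpler two-term split (truncating at $x_n$ and separating on whether $\max_{1\le i\le k}\xi_i>x_n$), since only an $O$-bound is needed rather than the sharp asymptotics of Proposition~\ref{naga82}, but your finer four-term decomposition with truncation at $x_n/y_n$ establishes the same statement.
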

\begin{proof}
Define $\xi_i':=\xi_i\1_{\xi_i\leq x_n}$ for $i\geq 1$ and $S_k':=\sum_{i=1}^k\xi_i', k\geq 1$.  Similarly as in \eqref{xi'} and \eqref{xis'}, we have 
$$\E[\xi_1']\sim G(x_n); \quad \E[S_k']\sim kG(x_n), \forall k=1,2,\ldots, \text{ as }n\to\infty,$$
and there exists $C'>0$ such that for any $n\geq 1,$
$$\var(\xi_1')\leq C'x_n^2\overline F(x_n); \quad \var(S'_{k})=k\var(\xi_1')\leq C'kx_n^2\overline F(x_n), \,\,\forall k=1,2,\ldots.$$
Note that the following inequality holds
\begin{align*}
 \P\Big(\left|S_k'-kG(x_n)\right|\geq a_nx_n\Big)\leq \P\Big(\left|S_k'-\E[S_k']\right|\geq a_nx_n-\left|\E[S_k']-kG(x_n)\right|\Big).
\end{align*}
Note also that
$$\frac{|\E[S_k']-kG(x_n)|}{x_n}= k\frac{|\E[\xi_1']-G(x_n)|}{x_n}=\frac{kG(x_n)}{x_n}\frac{|\E[\xi_1']-G(x_n)|}{G(x_n)}\leq \frac{nG(x_n)}{x_n}\frac{|\E[\xi_1']-G(x_n)|}{G(x_n)}.$$
The first factor in the last term is bounded, due to \eqref{xn}. The second factor converges to $0$, as $n\to\infty$, since $\E[\xi_1']\sim G(x_n)$ and $G(x_n)\to\infty$.
Then for $n$ large enough,
$$ \P\left(|S_k'-kG(x_n)|\geq a_nx_n\right)\leq  \P\left(|S_k'-\E[S_k']|\geq a_nx_n/2\right)\leq 4\frac{\var(S_k')}{a_n^2x_n^2}\leq \frac{4C'k\overline F(x_n)}{a_n^2},\quad \forall 1\leq k\leq n.$$
Therefore
\begin{equation}\label{||}
 \max_{1\leq k\leq n}\P\left(|S_k'-kG(x_n)|\geq a_nx_n\right)=O(n\overline F(x_n)), \text{ as }n\to\infty.
\end{equation}
Now we consider the statement of the corollary. Note that
\begin{align}\label{2term}
&\quad\P(S_k-kG(x_n)\geq a_nx_n)&\nonumber\\
&=\P\left(S_k-kG(x_n)\geq a_nx_n, \,\,\,\max_{1\leq i\leq k}\xi_i\leq x_n\right)+\P\left(S_k-kG(x_n)\geq a_nx_n,\,\,\, \max_{1\leq i\leq k}\xi_i> x_n\right)&\nonumber\\
&\leq \P\left(|S_k'-kG(x_n)|\geq a_nx_n\right)+\P\left(\max_{1\leq i\leq k}\xi_i> x_n\right).&\end{align}
Note that 
$$\P\left(\max_{1\leq i\leq k}\xi_i> x_n\right)=1-(1-\P(\xi_1> x_n))^k\leq k\overline F(x_n).$$
Together with \eqref{||}, the statement is proved. 

%For the second statement,  we have 
%$$\P(S_k-kG(x_n)\leq -a_nx_n)\leq \P(S_k'-kG(x_n)\geq -a_nx_n)\leq $$

\end{proof}

\begin{corollary}\label{twoscor}
Let $(x_n)_{n\geq 1}$, $(a_n)_{n\geq 1}, a$ be as defined in Theorem \ref{contri}.  Let $(y_n)_{n\geq 1}$ be any positive sequence as specified in the proof of Proposition \ref{naga82}. 
Then with the constant $C>0$ given in \eqref{C},  for $n$ large enough, we have 
$$\P(S_k-kG(x_n)\leq -a_nx_n)\leq \frac{4Ck\overline F(x_n)}{a^2_ny_n}, \quad \forall 1\leq k\leq n.$$
\end{corollary}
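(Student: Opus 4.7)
The plan is to reduce to the truncated variables $\xi_i' := \xi_i\1_{\xi_i < x_n/y_n}$ (same truncation as in the proof of Proposition~\ref{naga82}) and then apply Chebyshev's inequality. The key elementary observation is that truncation can only decrease the sum: since $\xi_i' \le \xi_i$ we have $S_k' \le S_k$, and consequently
\[
\{S_k - kG(x_n) \le -a_n x_n\} \;\subset\; \{S_k' - kG(x_n) \le -a_n x_n\}.
\]
This one-sided inclusion is what makes the lower tail easier than the upper tail: we lose nothing by truncating.

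Next I would re-center $S_k'$ by its own mean. Repeating the argument already used in the proof of Corollary~\ref{firstmax}, the identity in \eqref{xi'} gives $\E[\xi_1'] \sim G(x_n)$, and
\[
\frac{|\E[S_k'] - kG(x_n)|}{x_n}
  \;\le\; \frac{nG(x_n)}{x_n}\cdot\frac{|\E[\xi_1'] - G(x_n)|}{G(x_n)},
\]
where the first factor is bounded by \eqref{xn} and the second tends to $0$. Hence $|\E[S_k'] - kG(x_n)| = o(x_n)$ uniformly in $1\le k\le n$, and because $a_n\to a>0$, for $n$ large
\[
\{S_k' - kG(x_n) \le -a_n x_n\} \;\subset\; \bigl\{S_k' - \E[S_k'] \le -\tfrac{a_n x_n}{2}\bigr\}.
\]

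Finally, by Chebyshev and the independence of the $\xi_i'$,
\[
\P\!\left(S_k' - \E[S_k'] \le -\tfrac{a_n x_n}{2}\right)
  \;\le\; \frac{4\var(S_k')}{a_n^2 x_n^2}
  \;=\; \frac{4k\var(\xi_1')}{a_n^2 x_n^2}
  \;\le\; \frac{4Ck\,\overline F(x_n)}{a_n^2 y_n},
\]
the last inequality being the bound \eqref{C} from the proof of Proposition~\ref{naga82}. Chaining the two inclusions above with this estimate yields the claimed bound. There is no real obstacle here: the whole proof is a two-step containment followed by Chebyshev. The only point requiring care is the uniform smallness of the centering correction $|\E[S_k'] - kG(x_n)|$ in $k$, which is precisely where hypothesis \eqref{xn} (boundedness of $nG(x_n)/x_n$) is used, exactly as in Corollary~\ref{firstmax}.
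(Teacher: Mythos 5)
Your proposal is correct and follows essentially the same route as the paper: truncate at $x_n/y_n$, use the one-sided inclusion $S_k'\le S_k$, control the centering discrepancy $|\E[S_k']-kG(x_n)|=o(x_n)$ uniformly in $k$ via \eqref{xn}, and finish with Chebyshev and the variance bound \eqref{C}. No meaningful differences from the paper's argument.
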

%\begin{remark}
%The condition on $(y_n)_{n\geq 1}$ is weaker compared to \eqref{yn} as here we do not require $ny_n^2=o(x_n)$. 
%\end{remark}
\begin{proof}
Let $(\xi_i')_{1\leq i\leq n}$ and $(S_k')_{k\geq 1}$ be the terms defined respectively in \eqref{defxi'} and \eqref{defsk'}. Then 
$$\P(S_k-kG(x_n)\leq -a_nx_n)\leq \P(S_k'-kG(x_n)\leq -a_nx_n).$$
Recall \eqref{xi'} and \eqref{C}. For any fixed $\epsilon>0$ and large enough $n$, we have 
$$(1-\epsilon)kG(x_n)\leq \E[S_k']\leq (1+\epsilon)kG(x_n).$$ 
By \eqref{xn} and $a_n\to a>0,$
we have 
$$\left\{S_k'-kG(x_n)\leq -a_nx_n\right\}\mbox{\Large$\subset$}\left\{S_k'-\E[S_k']\leq -a_nx_n/2\right\}\mbox{\Large$\subset$}\left\{\left|S_k'-\E[S_k']\right| \geq a_nx_n/2\right\}, \quad \text{ for } n \text{ large enough}.$$
The above three displays lead to 
$$\P\left(S_k-kG(x_n)\leq -a_nx_n\right)\leq \P\left(|S_k'-\E[S_k']| \geq a_nx_n/2\right)\leq \frac{4}{a_n^2}\frac{\var(S_k')}{x_n^2}\leq\frac{4Ck\overline F(x_n)}{a_n^2y_n}.$$
\end{proof}

\begin{proof}[Proof of Theorem \ref{main}]
  Assertion $(b)$ follows from Proposition \ref{naga82}. If we prove $(c)$, then by the monotonicity of $\frac{x}{G(x)}$ (to be proved below), Assertion $(a)$ holds true, using Proposition \ref{naga82}.

  So it remains to prove $(c)$. Consider the function $\frac{x}{G(x)}, x\geq 0$. Note that 
$$\lim_{x\to 0}\frac{x}{G(x)}=\frac{1}{\overline F(0+)}.$$
Moreover $\overline F(x)$ is continuous to the left and has a limit to the right at every point $x>0$, and decreasing from $1$ to $0.$ Then we have 
$$\frac{x}{G(x)}-\frac{1}{\overline F(0+)}=\frac{x}{\int_0^x\overline F(t)dt}-\frac{1}{\overline F(0+)}=\int_0^x\frac{\int_0^t\overline F(s)ds-t\overline F(t)}{(\int_0^t\overline F(s)ds)^2}dt=\int_0^x\frac{G(t)-t\overline F(t)}{G(t)^2}dt.$$
Using \eqref{gxf}, there exists a number $t_0>0$ such that $G(t)>t\overline F(t)$ for any $t\geq t_0.$ Therefore $\frac{x}{G(x)}$ is strictly increasing for $x\geq t_0.$

Let $(x_n)_{n\geq 1}$ be a positive sequence such that 
$$0<a:=\liminf_{n\to\infty}\frac{x_n}{nG(x_n)}\leq b:= \limsup_{n\to\infty}\frac{x_n}{nG(x_n)}<\infty.$$
Then by Corollary \ref{firstmax} and Corollary \ref{twoscor} and \eqref{nv}, \eqref{yinf},  for any $\epsilon\in (0,1),$
\begin{equation}\label{xbig}\lim_{n\to\infty}\P\left(\frac{1-\epsilon}{b}\leq \frac{S_n}{x_n}\leq \frac{1+\epsilon}{a}\right)=1.\end{equation}

Let $(y_n)_{n\geq 1}$ be another positive sequence such that 
$$\limsup_{n\to\infty}\frac{y_n}{nG(y_n)}=0.$$
By the monotonicity of $\frac{x}{G(x)}$, we have $y_n\leq x_n$ for $n$ large enough. Moreover due to $G(x)$ being slowly varying, it must be that $y_n=o(x_n)$ as $n\to\infty.$ Then \eqref{xbig}  implies that 
$$\lim_{n\to\infty}\P(S_n\geq y_n)=1.$$
Thus, Assertion $(c)$ is proved. 
\end{proof}

\subsection{Contributions to $S_n$ from different terms when $S_n$ is large}
\label{sect:ProofThm2}

\begin{proof}[Proof of Assertion \eqref{condm} in Theorem \ref{contri}]
Without loss of generality, say $\xi_n=\xi_*$. Then the other variables $\xi_1,\xi_2,\ldots,\xi_{n-1}$ are exchangeable.
Therefore, the distribution of $S_{n,k}$ does not depend on the particular set $\Xi_k.$ Now we prove the statement in \eqref{condm}. By definition,
\begin{align*}
\P\big(|S_{n,k}-kG(x_n)| \geq \epsilon x_n \,\,\, \big| \,\, M_n\geq a_n x_n\big)&=\frac{\P(|S_{n,k}-kG(x_n)|\geq \epsilon x_n, \,\,\, M_n\geq a_n x_n)}{\P(M_n\geq a_n x_n)}.&\end{align*}
For the numerator, we have
$$\big\{|S_{n,k}-kG(x_n)|\geq \epsilon x_n, \,\,\,M_n\geq a_n x_n\big\}=\bigcup_{i=1}^n\big\{|S_{n,k}-kG(x_n)|\geq \epsilon x_n, \,\,\,\xi_i=\xi_*\geq a_n x_n\big\}.$$
The above two displays entail that 
\begin{align*}
\P\big(|S_{n,k}-kG(x_n)|\geq \epsilon x_n \,\,\, \big| \,\, M_n\geq a_n x_n\big)&\leq n\frac{\P\big(|S_{n,k}-kG(x_n)|\geq \epsilon x_n,\,\,\, \xi_n=\xi_*\geq a_n x_n\big)}{\P(M_n\geq a_n x_n)}&\\
&=n\frac{\P\big(|S_k-kG(x_n)|\geq \epsilon x_n,\,\,\, \xi_n=\xi_*, \,\,\,\xi_n\geq a_n x_n\big)}{\P(M_n\geq a_n x_n)}&\\
&\leq n\frac{\P\big(|S_k-kG(x_n)|\geq \epsilon x_n,\,\,\, \xi_n\geq a_n x_n\big)}{\P(M_n\geq a_n x_n)}&\\
&=n\frac{\P\big(|S_k-kG(x_n)|\geq \epsilon x_n\big)\P(\xi_n\geq a_n x_n)}{\P(M_n\geq a_n x_n)}&\\
&\sim n\frac{\P\big(|S_k-kG(x_n)|\geq \epsilon x_n\big)a^{-1}\overline F(x_n)}{a^{-1}n\overline F(x_n)}.&
\end{align*}
Then we use Corollary \ref{firstmax} and \ref{twoscor}, equations \eqref{nv} and \eqref{yinf} to conclude that \eqref{condm} holds. 
\end{proof}

\begin{proof}[Proof of Assertion \eqref{conds} in Theorem \ref{contri}]
Note that 
\begin{align*}
\P\big( & |S_{n,k}-kG(x_n)|\geq \epsilon x_n \,\,\, \big| \,\, S_{n}-nG(x_n)\geq a_nx_n\big)&\\
& = \frac{\P\big(|S_{n,k}-kG(x_n)|\geq \epsilon x_n,  \,\,\, S_{n}-nG(x_n)\geq a_nx_n\big)}{\P(S_{n}-nG(x_n)\geq a_nx_n)}&\\
& = \frac{\P\big(|S_{n,k}-kG(x_n)|\geq \epsilon x_n,  \,\,\,S_{n}-nG(x_n)\geq a_nx_n,\,\,\, M_n\geq a_n x_n\big)}{\P(S_{n}-nG(x_n)\geq a_nx_n)}&\\
&\quad\quad +\frac{\P\big(|S_{n,k}-kG(x_n)|\geq \epsilon x_n, \,\,\, S_{n}-nG(x_n)\geq a_nx_n,\,\,\, M_n< a_n x_n\big)}{\P(S_{n}-nG(x_n)\geq a_nx_n)}&\\
& \leq \frac{\P\big(|S_{n,k}-kG(x_n)|\geq \epsilon x_n, \,\,\,M_n\geq a_n x_n\big)}{\P(S_{n}-nG(x_n)\geq a_nx_n)}+\frac{\P(S_{n}-nG(x_n)\geq a_nx_n, \,\,\,M_n< a_n x_n)}{\P(S_{n}-nG(x_n)\geq a_nx_n)}&\\
& =\frac{\P(M_n\geq a_n x_n)}{\P(S_{n}-nG(x_n)\geq a_nx_n)}\P\big(S_{n,k}-kG(x_n)\geq \epsilon x_n \,\,\, \big| \,\, M_n\geq a_n x_n\big)&\\
&\quad\quad +\P(M_n<a_n x_n\,\,\, | \,\, S_{n}-nG(x_n)\geq a_nx_n).&
\end{align*}
Then we use \eqref{sim} and \eqref{condm} to conclude that \eqref{conds} holds. 
\end{proof}

\section*{Acknowledgements}
The authors thank Sergey Foss for helpful comments and for pointers to the relevant literature. The authors also thank Dmitry Korshunov and Priscilla Greenwood for their friendly comments and references.  Linglong Yuan acknowledges support of the National Natural Science Foundation of China (Youth Programme, grant 11801458). Matthias Birkner and Linglong Yuan were in part supported by Deutsche Forschungsgemeinschaft through DFG priority programme 1950 Probabilistic Structures in Evolution, grant BI 1058/2-2.

\end{document}